\newtheorem{theorem}{Theorem}[section]
\newtheorem{definition}[theorem]{Definition}
\newtheorem{lemma}[theorem]{Lemma}
\newcommand{\Keywords}[1]{\par\noindent
{\small{\bf Keywords\/}: #1}}
\newtheorem*{lema1}{Lemma \ref{lema}}
\newtheorem*{teorema1}{Theorem \ref{propqvi}}
\begin{document}

\title{Optimal Maintenance Policy for a Compound Poisson Shock Model}

\author{Mauricio Junca\thanks{Department
of Mathematics, Universidad de los Andes, Bogot\'a,
Colombia, e-mail: mj.junca20@uniandes.edu.co.} \and Mauricio Sanchez-Silva\thanks{Department
of Civil and Environmental Engineering, Universidad de los Andes, Bogot\'a,
Colombia, e-mail: msanchez@uniandes.edu.co.}
}

\maketitle

\begin{abstract}
Engineered and infrastructure systems deteriorate (e.g., loss capacity) as a result of adverse environmental or external conditions. Modeling deterioration is essential to define optimum design strategies and inspection and maintenance (intervention) programs. In particular, the main purpose of maintenance is to increase the system availability by extending the life of the system. Most strategies for maintenance optimization focus on defining long term strategies based on the system's condition at the decision time (e.g., $t=0$). However, due to the large uncertainty in the system's performance through life, an optimal maintenance policy requires both permanent monitoring and a cost-efficient plan of interventions.  This paper presents a model to define an optimal maintenance policy of systems that deteriorate as a result of shocks. Deterioration caused by shocks is modeled as a compound Poisson process and the optimal maintenance strategy is based on an impulse control model.  In the model the optimal time and size of interventions are executed according the the system state, which is obtained from permanent monitoring.
\newline
\Keywords{Impulse control, compound Poisson process, maintenance, optimization, shock model}
\end{abstract}

\section{Introduction}

Engineered and  infrastructure systems deteriorate as a result of the normal use or due to external demands imposed by adverse environmental conditions. The main challenge in modeling deterioration is to manage the damage accumulation mechanisms and the associated uncertainties. Deterioration mechanisms can be divided into progressive (e.g., corrosion, fatigue) and shock-based (e.g., earthquakes, blasts)\cite{San11}. 

In the particular case of large infrastructure, progressive deterioration can be caused by, for instance, chloride ingress, which leads usually to steel corrosion, loss of effective cross-section of steel reinforcement in RC structures, concrete cracking, loss of bond and spalling \cite{Bastidas09}. The details of these deterioration mechanisms are beyond the scope of the paper but are well described by, for instance, \cite{Klutke02} and \cite{Frang04}. On the other hand, deterioration caused by extreme events is usually associated with earthquakes, hurricanes or blasts (including both accidents and terrorists attacks). Extensive research has been carried out on mathematical models for shock degradation in infrastructure and in other types of engineered artifacts; for more details see \cite{BarlowPosh65}, \cite{AvenJan99}, \cite{Naka76}, \cite{Naka85}, \cite{Feldman77a}, \cite{San11}, \cite{YeCX} and \cite{WangPham}.

A {\it maintenance program} is as a set of actions directed to keep a deteriorating system (e.g., machine, building, infrastructure) operating above a pre-specified level of service; thus, maintenance is carried out to improve the availability or to extend the life of the system \cite{Wortman94} \cite{Naka05}. The long-term benefits of an optimum maintenance strategy include: improvement of the system reliability, replacement cost reduction, system downtime decrease and better spares inventory management\cite{Naka05}.

Frequently, a comprehensive maintenance program includes \textit{preventive} and/or \textit{corrective} or \textit{reactive} actions \cite{Misra92}, \cite{Dekker96}. Preventive maintenance involves all actions directed to avoid failure or to avoid higher cost at a later stage by keeping the component in a safe or operational condition. Preventive maintenance is frequently carried out without knowing the actual state of the component at the time of the intervention. On the other hand, corrective maintenance focuses on interventions once failure has been identified. Corrective maintenance is frequently more expensive than preventive maintenance since the cost may include, in addition to the repair cost, downtime costs or replacement of undamaged system components. While preventive maintenance is commonly carried out at fixed time intervals, corrective maintenance is performed at unpredictable intervals because failure times cannot be known {\sl a priori} \cite{AValdez89}\cite{Wang06}.

Most work on defining maintenance strategies focuses on establishing \textit{action plans} of interventions \cite{AValdez89}. However, for infrastructure systems or large systems with long expected lifetimes (e.g., bridges that last over 75 years), these approaches are not realistic. Over time, the system functionality may be modified, some unplanned demands may appear or technology changes, thus, forcing changes in the original maintenance plans. Predefined long-term maintenance plans are rarely implemented or they need to be modified as new information becomes available. 

Given the uncertainty associated to the component's performance, the best maintenance policy should be based on a permanent monitoring strategy that leads to optimum interventions. This paper presents a maintenance strategy based on \textit{impulse control} models in which the time at which maintenance is carried out and the extent of interventions are optimized simultaneously to maximize the cost-benefit relationship. In the model the optimal time and size of interventions are executed according the the system state, which is obtained from permanent monitoring. 

The paper is organized as follows: the basic concepts of the impulse control model are presented in section II and, in section III, some basic theorems are outlined. The core of the proposed optimum strategy is presented and discussed in section IV and a description of the numerical solution is presented in section V. Finally, in section VI the proposed approach is illustrated with an example and the main results and conclusions are summarized in section VII.

\section{Impulse Control Model}

Let's define a system component (e.g., bridge) whose performance is defined by a random variable $R$; for instance, it can be the component's reliability. Furthermore, assume that the component is subject to shocks, which occur according to a Poisson process, and every shock causes a random amount of damage $S$. Thus, if $(\Omega,\mathcal{F},P)$ is the probability space in which we define all the stochastic quantities, we define the process $R=\{R_t,{t\geq0}\}$

\begin{equation}
R_t=r_0-\sum\limits_{i=1}^{N_t}S_i
\end{equation}

\noindent where $N=\{N_t,{t\geq0}\}$ is an homogeneous Poisson process with intensity $\lambda>0$. The sequence of the sizes of the shocks $\{S_i\}_{i\in\mathbb{N}}$ are independent and identically distributed random variables with probability distribution $F$ on $(0,\infty)$. We assume that $\{S_i\}_{i\in\mathbb{N}}$ is independent of the Poisson process $N$. The initial reliability level is $R_{0-}=r_0$ (Fig. \ref{Fig1}a).

\begin{definition}
A \textit{maintenance policy} for the system is a double sequence $\nu=\{(\tau_i,\zeta_i)\}_{i\in\mathbb{N}}$  of intervention times $\tau_i$ at which the performance is improved an amount $\zeta_i$ (in $R$-units). The policy is an \textit{impulse control} if satisfies the following conditions:

\begin{enumerate}
\item $0\leq\tau_i\leq\tau_{i+1}$ for all $i\in\mathbb{N}$,
\item $\tau_i$ is a stopping time with respect to the filtration $\mathcal{F}_t=\sigma\{R_{s-}|s\leq t\}$ for $t\geq0$,
\item $\zeta_i$ is a $\mathcal{F}_{\tau_i}$-measurable random variable,
\end{enumerate}
\end{definition}

Note that the class of impulse control policies is very general and include, in particular, policies with fixed time interventions.

Given an impulse control $\nu$, the controlled process $R^{\nu}=\{R^{\nu}_t,{t\geq0}\}$ is defined by
\begin{equation}
R_t^{\nu}=r_0-\sum\limits_{i=1}^{N_t}S_i+\sum\limits_{i=1}^{\infty}\zeta_iI_{\{\tau_i\leq t\}}.
\end{equation}
where $I_{\{\tau_i\leq t\}}$ is the indicator function  (Fig. \ref{Fig1}b).

\begin{figure}[htbp] 
\begin{center}
\includegraphics[scale=0.575]{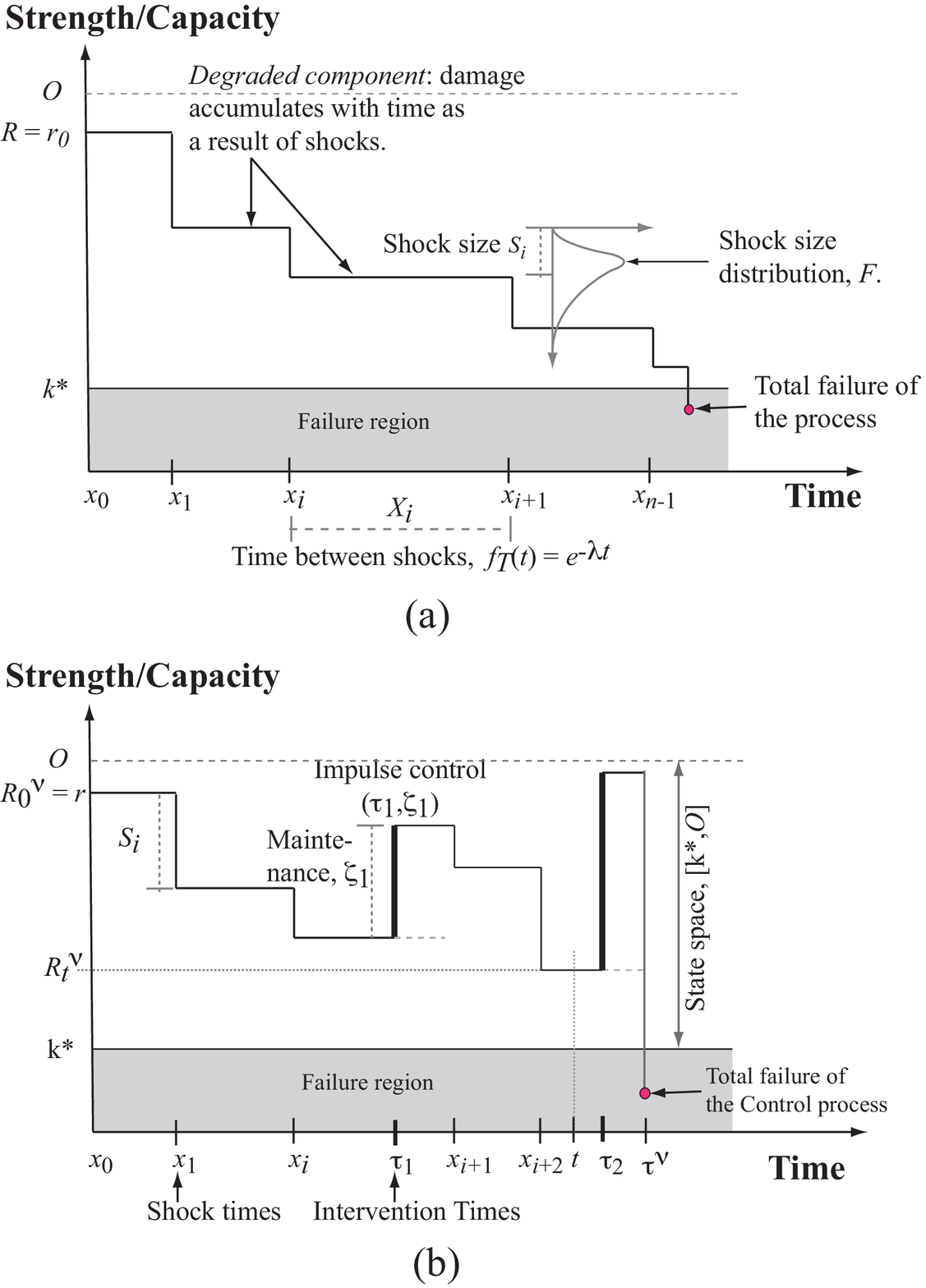}
\caption{Description of the impulse control model.}
\label{Fig1}
\end{center}
\end{figure}

Total failure occurs when the system performance falls bellow a pre-defined threshold $k^*$ with $0 \leq k^* $. Thus, the time of total failure of the controlled process is denoted by 

\begin{equation}
\tau^{\nu}=\inf\{t>0|R^{\nu}_t\leq k^*\},
\end{equation}

\noindent and it is assumed that if the system reaches the threshold the process is stopped. We denote by $\tau$ the time of total failure of the uncontrolled process $R$. Without any loss of generality we assume that $k^*=0$. While $k^*$ denote a lower limit for the process, we also assume that there is a maximum (i.e., optimum) performance level $O$ that cannot be improved. 

Any intervention (i.e., maintenance) at time $\tau_i$ depends on the state of the system just before the intervention $R_{\tau_i-}$. Therefore, the set of possible actions is $[0,O-R_{\tau_i-}]$ and we call $\mathcal{I}$ the set of impulse controls such that $\zeta_i\in[0,O-R_{\tau_i-}]$ for all $i$. We will only consider maintenance policies that are impulse controls and maintenance policies that are impulse controls in $\mathcal{I}$.

If we denote $\mathbb{E}_{r_0}[\cdot]:=\mathbb{E}[\cdot |R^{\nu}_{0-}=r_0]$, for a given $\nu\in\mathcal{I}$ and initial component state $r_0\in[0,O]$, the expected profit (Benefits-Costs) can be computed as:

\begin{equation}
J(r_0,\nu)=\mathbb{E}_{r_0}\left[\int_0^{\tau^{\nu}}e^{-\delta s}G(R_s^{\nu})ds-\sum\limits_{\tau_i<\tau^{\nu}}e^{-\delta\tau_i}C(R^{\nu}_{\tau_i-},\zeta_i)\right],
\label{CBeq}
\end{equation}

\noindent where $G$ is a non-negative continuous, increasing and concave function on $[0,O]$ with $G(0)=0$. $C$ is continuous and increasing in both variables function such that $C>0$, and $\delta$ is the discount factor. The term $e^{-\delta s}$ corresponds to the discounting function used to evaluate the net present value. Note that the first term in equation \eqref{CBeq} corresponds to the discounted benefits; where the function $G$ can be interpreted as an utility function. On the other hand, the second term describes the discounted costs of interventions with $C(r,\zeta)$ the cost of bringing the system from level $r$ to level $r+\zeta$.

The objective of the model is then to find the policy that maximizes the profit among all admissible impulse controls; in other words, we want to find 

\begin{equation}
V(r_0)=\sup_{\nu\in\mathcal{I}}J(r_0,\nu)
\label{Vobj}
\end{equation}

\noindent for a given level $r_0$ in the state space $[0,O]$. Note that it is very difficult to calculate $V(r_0)$ directly from \eqref{Vobj}. First, given a policy $\nu$, we can use Monte Carlo simulations to estimate the expected profit $J(r_0,\nu)$. Then, we have to repeat this process for all possible policies $\nu$, which clearly cannot be obtained at a reasonable computational cost.

Instead, we will solve the problem for all $r\in[0,O]$ at the same time, that is, we want to find the \textit{value function}

\begin{equation}
V(r)=\sup_{\nu\in\mathcal{I}}J(r,\nu).
\label{ObjFunc}
\end{equation}

Although apparently this is a harder problem, we will characterize $V$ as the unique solution of certain equation (that does not involve expectation) and solve this equation numerically. 

From the definition of the value function, we can easy see that $V\geq0$ since we can choose to do nothing. Also, $V(0)=0$ and $V$ is bounded. We will use this to characterize the function $V$.

\section{Preliminaries}

In this section we will present some definitions and fundamental concepts that are required to find $V$ in equation \eqref{ObjFunc}. We start with the following lemma whose proof is presented in the Appendix.

\begin{lemma}\label{lema}
Let $T$ be a stopping time with respect to the filtration $\mathcal{F}_t$.  Then for all $r\in[0,O]$

\begin{equation}\label{dynamic}
V(r)\geq\mathbb{E}_r\left[\int_0^{T\wedge\tau}e^{-\delta s}G(R_s)ds+e^{-\delta T}V(R_T)I_{\{T<\tau\}}\right].
\end{equation}

Furthermore, we have equality in \eqref{dynamic} if it is not optimal to intervene the system before $T$.
\end{lemma}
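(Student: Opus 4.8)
The plan is to establish \eqref{dynamic} as the "easy" half of a dynamic programming principle by exhibiting, for each $\epsilon>0$, an admissible policy whose profit dominates the right-hand side up to $\epsilon$. Fix $\epsilon>0$. By definition of $V$ as a supremum, for every state $r'\in[0,O]$ there is a policy $\nu^{r'}\in\mathcal{I}$ with $J(r',\nu^{r'})\geq V(r')-\epsilon$; using a measurable selection argument one can arrange the map $r'\mapsto\nu^{r'}$ to be measurable, which is needed to paste these policies together. I would then define a new policy $\nu$ that does nothing on $[0,T)$ and, on the event $\{T<\tau\}$, applies from time $T$ onward the time-shifted policy $\nu^{R_T}$ started from the state $R_T$. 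One must check that $\nu$ is admissible: the intervention times of the shifted piece are of the form $T+\sigma_i$ for stopping times $\sigma_i$ of the shifted filtration, and by the strong Markov property these remain $\mathcal{F}_t$-stopping times, while the $\mathcal{F}_{\tau_i}$-measurability of the impulse sizes is inherited. Since before $T$ nothing happens, $R^{\nu}$ agrees with the uncontrolled process $R$ on $[0,T)$; moreover, on $\{T<\tau\}$ the absence of upward jumps of the uncontrolled process gives $R_T\in(0,O]$, so $V(R_T)$ is well defined and the ordering and measurability constraints in the definition of $\mathcal{I}$ hold.

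The core computation is to split $J(r,\nu)$ at time $T$. On $[0,T\wedge\tau)$ the controlled process coincides with $R$ and there are no intervention costs, so this part contributes exactly $\mathbb{E}_r[\int_0^{T\wedge\tau}e^{-\delta s}G(R_s)\,ds]$. The remaining contribution, collected on $\{T<\tau\}$, carries a factor $e^{-\delta T}$ and, after the change of clock $s\mapsto T+s$, is the profit functional of $\nu^{R_T}$ evaluated for the process restarted at $R_T$. Here I would invoke the strong Markov property of the compound Poisson process $R$ (a L\'evy process, hence with stationary independent increments): conditionally on $\mathcal{F}_T$, the post-$T$ increments are independent of the past and distributed as a fresh copy of the process started at $0$. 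Conditioning on $\mathcal{F}_T$ and using the tower property then yields
\begin{equation*}
J(r,\nu)=\mathbb{E}_r\left[\int_0^{T\wedge\tau}e^{-\delta s}G(R_s)\,ds+e^{-\delta T}J(R_T,\nu^{R_T})I_{\{T<\tau\}}\right].
\end{equation*}
Since $J(R_T,\nu^{R_T})\geq V(R_T)-\epsilon$ and $V\geq0$ is bounded, bounding below and letting $\epsilon\to0$ gives \eqref{dynamic}.

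For the equality statement I would run the same decomposition on a near-optimal policy for the original problem rather than a pasted one. If it is not optimal to intervene before $T$, then an $\epsilon$-optimal policy $\hat\nu$ may be taken to make no intervention on $[0,T)$, so the identical splitting gives $V(r)-\epsilon\leq J(r,\hat\nu)=\mathbb{E}_r[\int_0^{T\wedge\tau}e^{-\delta s}G(R_s)\,ds+e^{-\delta T}J(R_T,\hat\nu^{(T)})I_{\{T<\tau\}}]$, where $\hat\nu^{(T)}$ is the shifted continuation. Bounding $J(R_T,\hat\nu^{(T)})\leq V(R_T)$ produces the reverse inequality, and combined with \eqref{dynamic} this forces equality as $\epsilon\to0$.

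The step I expect to be the main obstacle is the measurable selection of the near-optimal controls $r'\mapsto\nu^{r'}$, together with the careful verification that the pasted policy lies in $\mathcal{I}$ and that the time shift commutes with the strong Markov property; by contrast, the conditioning and tower-property computation, and the boundedness of $V$ used to pass to the limit, are comparatively routine.
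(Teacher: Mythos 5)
Your proposal is correct and follows the same basic route as the paper: decompose the profit functional at the stopping time $T$ via the strong Markov property of the compound Poisson process, using a policy that does nothing on $[0,T)$ and restarts afterward. The difference lies in how the continuation is chosen, and here your version is actually the more careful one. The paper fixes a single policy $\nu\in\mathcal{I}$, forms the deterministic shift $\nu+T=\{(\tau_i+T,\zeta_i)\}$, derives
\begin{equation*}
J(r,\nu+T)=\mathbb{E}_r\left[\int_0^{T\wedge\tau}e^{-\delta s}G(R_s)\,ds+e^{-\delta T}J(R_T,\nu)I_{\{T<\tau\}}\right],
\end{equation*}
and then ``takes $\sup$ over $\nu$.'' Strictly speaking this yields only $V(r)\geq\sup_{\nu}\mathbb{E}_r[\cdots+e^{-\delta T}J(R_T,\nu)I_{\{T<\tau\}}]$, with the supremum \emph{outside} the expectation, whereas \eqref{dynamic} requires $V(R_T)=\sup_{\nu}J(R_T,\nu)$ \emph{inside} it; since $\sup_\nu\mathbb{E}[\cdot]\leq\mathbb{E}[\sup_\nu(\cdot)]$ goes the wrong way, an additional argument is needed. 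Your state-dependent pasting of measurably selected $\epsilon$-optimal policies $\nu^{R_T}$ is exactly the standard device that closes this gap (the measurable-selection step you flag as the main obstacle is genuinely the crux, though routine in this bounded one-dimensional setting, e.g.\ via selection on a countable grid or standard selection theorems). For the equality statement your argument mirrors the paper's terse claim that $V(r)=\sup_{\nu}J(r,\nu+T)$ when no early intervention is optimal; note only that your continuation $\hat\nu^{(T)}$ may depend on pre-$T$ information beyond $R_T$, so the bound $J(R_T,\hat\nu^{(T)})\leq V(R_T)$ should be read as a conditional statement given $\mathcal{F}_T$, with the frozen past yielding an admissible policy for the restarted process --- a point the paper glosses over in the same way. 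In short: same decomposition, but your proposal repairs the sup-versus-expectation step that the paper's one-line conclusion elides.
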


In order to characterize the value function $V$ in equation \eqref{ObjFunc} we need to define two operators. The first one is the \textit{intervention operator} $\mathcal{M}$ defined as

\begin{equation}\label{interoper}
\mathcal{M}f(r)=\sup_{0\leq\zeta\leq O-r}f(r+\zeta)-C(r,\zeta)
\end{equation}

\noindent for a given function $f$ defined on $[0,O]$ and $r$ in the same interval. We are interested in applying $\mathcal{M}$ to the function $V$. Hence, if we consider any policy $\nu$ such that $\tau_1=0$ and write $\nu=(0,\zeta)\cup\hat{\nu}=(0,\zeta)\cup\{(\tau_i,\zeta_i)\}_{i\geq2}$, then

$$V(r)\geq J(r,\nu)=-C(r,\zeta)+J(r+\zeta,\hat{\nu}),$$

\noindent and since $\hat{\nu}$ is arbitrary we obtain

$$V(r)\geq V(r+\zeta)-C(r,\zeta).$$

Taking the supremum over all admissible $\zeta$, we obtain that

\begin{equation}\label{interV}
V(r)\geq\sup_{0\leq\zeta\leq O-r}V(r+\zeta)-C(r,\zeta)=\mathcal{M}V(r).
\end{equation}

The second operator is the \textit{infinitesimal generator} of the uncontrolled process $R$, that is:

\begin{equation}\label{infinoper}
\mathcal{A}f(r)=\lambda\left(\int_0^rf(r-s)dF(s)-f(r)\right).
\end{equation}

The infinitesimal operator has the property that the process
\begin{equation}\label{martingale}
e^{-\delta t}f(R_t)-f(r)+\int_0^te^{-\delta s}\left(\delta f(R_s)-\mathcal{A}f(R_s)\right)ds
\end{equation}

is a Martingale with respect to  $\mathcal{F}_t$, for bounded $f$ (see \cite{rogerswilliams} and \cite{TAlb}). Taking expectation in \eqref{martingale} and using Optional Sampling Theorem we obtain the so-called Dynkin's Formula: given $T_1\leq T_2$ almost sure (a.s.) finite stopping times, then

\begin{align}\label{Dynk}
\mathbb{E}[e^{-\delta T_2}&f(R_{T_2})-e^{-\delta T_1}f(R_{T_1})]\nonumber\\
&=\mathbb{E}\left[\int_{T_1}^{T_2}e^{-\delta s}\left(\mathcal{A}f(R_s)-\delta f(R_s)\right)ds\right].
\end{align}
Bellow, we will use this Formula with $f$ replaced by $V$.

\section{Optimal maintenance policy}

Since the process $R$ is Markovian, the future is independent of the past given the present. Thus, in order to obtain an optimal policy it is necessary to differentiate between the component states at which an intervention is required and those where there is no need to intervene the system. It is important to stress that because of the Markovian property, this classification will always be the same and will only depend on the state of the system.

We use now the intervention operator $\mathcal{M}$ to describe the optimal policy. Using equation \eqref{interV} $V\geq\mathcal{M}V$,  we can divide the state space $[0,O]$ into the subsets

$$A=\{r\in[0,O]:V(r)=\mathcal{M}V(r)\}$$
and
$$B=\{r\in[0,O]:V(r)>\mathcal{M}V(r)\}.$$

For $r\in A$ we must intervene the system immediately and improve the performance process by $\zeta^*$, where

\begin{eqnarray}
\mathcal{M}V(r)&=&V(r+\zeta^*)-C(r,\zeta^*)\notag\\&=&\sup_{0\leq\zeta\leq O-r}V(r+\zeta)-C(r,\zeta).
\end{eqnarray}

Therefore, we call the set $A$ the {\it intervention region}.  

Now, for $r\in B$ we must do nothing and let the system evolve. Therefore, we obtain equality in \eqref{dynamic}, and using Dynkin's Formula we have that $\delta V(r)-\mathcal{A}V(r)=G(r)$. We call the set $B$ the {\it no intervention region} (Fig. \ref{Fig2}).

\begin{figure}[htbp] 
\begin{center}
\includegraphics[scale=0.6]{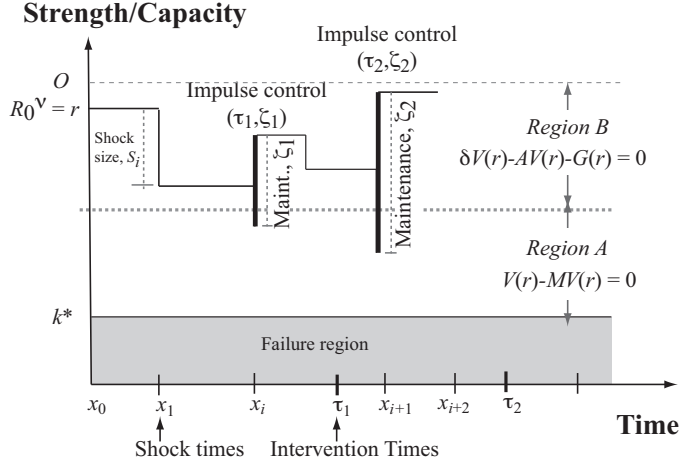}
\caption{Description of the impulse control model.}
\label{Fig2}
\end{center}
\end{figure}

The previous discussion is the intuition behind the following theorem (proved in Appendix).

\begin{theorem}\label{propqvi}
The value function $V$ solves the equation
\begin{equation}\label{qvi}
\min\{\delta V(r)-\mathcal{A}V(r)-G(r),V(r)-\mathcal{M}V(r)\}=0,
\end{equation}
for all $r\in[0,O]$.
\end{theorem}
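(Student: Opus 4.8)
The plan is to verify the two inequalities implicit in \eqref{qvi} and then show that at every point at least one of them is an equality. The second argument is immediate: inequality \eqref{interV} already gives $V(r)-\mathcal{M}V(r)\geq 0$ for all $r$, and the splitting $[0,O]=A\cup B$ from the discussion above shows that on $A$ we have $V(r)-\mathcal{M}V(r)=0$. It therefore remains to treat the first argument $\delta V(r)-\mathcal{A}V(r)-G(r)$: to prove it is nonnegative everywhere, and that it vanishes on $B$.

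For nonnegativity, I would fix $r\in[0,O]$ and apply Lemma \ref{lema} with the deterministic time $T=h$ for small $h>0$, giving
\begin{equation*}
V(r)\geq\mathbb{E}_r\left[\int_0^{h\wedge\tau}e^{-\delta s}G(R_s)\,ds+e^{-\delta h}V(R_h)I_{\{h<\tau\}}\right].
\end{equation*}
The key step is to rewrite the last term using the martingale property \eqref{martingale}: Dynkin's Formula \eqref{Dynk} applied to $V$ between $0$ and $h$ (the absorption at the threshold being already encoded in $\mathcal{A}$, since $V(0)=0$) yields $\mathbb{E}_r[e^{-\delta h}V(R_h)I_{\{h<\tau\}}]-V(r)=\mathbb{E}_r[\int_0^{h\wedge\tau}e^{-\delta s}(\mathcal{A}V(R_s)-\delta V(R_s))\,ds]$. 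Substituting and cancelling $V(r)$ gives
\begin{equation*}
0\geq\mathbb{E}_r\left[\int_0^{h\wedge\tau}e^{-\delta s}\bigl(G(R_s)+\mathcal{A}V(R_s)-\delta V(R_s)\bigr)\,ds\right].
\end{equation*}
Dividing by $h$ and letting $h\downarrow0$, the right-continuity of the paths of $R$ together with the continuity of $G$ and of $V$ collapses the averaged integrand to its value at $s=0$, producing $0\geq G(r)+\mathcal{A}V(r)-\delta V(r)$, i.e. $\delta V(r)-\mathcal{A}V(r)-G(r)\geq0$.

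To obtain equality on $B$, I would exploit the second assertion of Lemma \ref{lema}. Fix $r\in B$, so $V(r)>\mathcal{M}V(r)$; by continuity of $V$ (hence of $\mathcal{M}V$) this strict inequality persists on a neighborhood, so $B$ is open. Because $R$ is a pure-jump process that stays constant at $r$ until its first jump time $T_1$, the optimal policy does not intervene on $[0,T_1\wedge h)$, so with $T=T_1\wedge h$ it is not optimal to intervene before $T$ and Lemma \ref{lema} holds with equality. Repeating verbatim the Dynkin substitution above, but now with an equality throughout, gives $0=\mathbb{E}_r[\int_0^{T}e^{-\delta s}(G(R_s)+\mathcal{A}V(R_s)-\delta V(R_s))\,ds]$, and the same $h\downarrow0$ limit forces $\delta V(r)-\mathcal{A}V(r)-G(r)=0$. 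Combining the three facts---$V-\mathcal{M}V\geq0$ everywhere, $V-\mathcal{M}V=0$ on $A$, and $\delta V-\mathcal{A}V-G=0$ on $B$ with $\delta V-\mathcal{A}V-G\geq0$ everywhere---shows the minimum in \eqref{qvi} is attained at $0$ at every $r\in[0,O]=A\cup B$.

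The main obstacle I expect is the infinitesimal limit $h\downarrow0$: it requires enough regularity of $V$ for $\mathcal{A}V(r)$ to be well defined pointwise and for the averaged integrand to converge to its initial value. This in turn rests on establishing continuity (and boundedness) of $V$, which is also what makes $B$ open and legitimizes the equality case of Lemma \ref{lema}; a careful treatment of the absorption term $I_{\{h<\tau\}}$ and of the applicability of Dynkin's Formula \eqref{Dynk} to the killed process is the delicate bookkeeping underlying both parts.
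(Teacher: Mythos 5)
Your overall strategy coincides with the paper's: both arguments rest on inequality \eqref{interV} for the $V-\mathcal{M}V$ part, on Lemma \ref{lema} combined with Dynkin's Formula \eqref{Dynk} applied to $V$, and on the fact that the pure-jump process sits at $r$ until the first shock. The one place you diverge is the infinitesimal step, and it is exactly where your argument acquires an avoidable gap: you localize with the deterministic horizon $T=h$ and pass to the limit $h\downarrow0$, which, as you yourself flag, requires continuity of $V$ (both to identify the limit of the averaged integrand and to make $B$ open). The paper never establishes --- and never needs --- continuity of $V$. It takes $T=T_1$, the time of the first shock. Since $R_s\equiv r$ on $[0,T_1)$, the Dynkin integrand is the \emph{constant} $\delta V(r)-\mathcal{A}V(r)-G(r)$, so the inequality obtained from \eqref{dynamic} reads
\begin{equation*}
\bigl(\delta V(r)-\mathcal{A}V(r)-G(r)\bigr)\,\mathbb{E}\!\left[\tfrac{1}{\delta}\bigl(1-e^{-\delta T_1}\bigr)\right]\geq0,
\end{equation*}
and the pointwise inequality drops out with no limiting procedure at all. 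Note also that $T_1\leq\tau$ a.s.\ (the process can only cross the threshold at a shock), so the killed-process bookkeeping you worry about with a deterministic $h$ simply disappears. The same choice settles the equality case: if $V(r)>\mathcal{M}V(r)$ it is not optimal to intervene at time $0$, and since the state does not move before $T_1$ it is not optimal to intervene on $[0,T_1)$; Lemma \ref{lema} then holds with equality at $T=T_1$ directly, and no openness of $B$ or neighborhood argument is needed --- only the value at the single point $r$.

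If you prefer to keep your $h\downarrow0$ version, it can in fact be rescued without continuity of $V$: on $\{T_1>h\}$ the integrand equals its value at $r$, while the complementary event has probability $1-e^{-\lambda h}=O(h)$ and the integrand is bounded (since $V$ is bounded, $G$ is bounded on $[0,O]$, and hence $\mathcal{A}V$ is bounded, with $\mathcal{A}V(r)$ well defined pointwise from boundedness and measurability alone), so the contribution of $\{T_1\leq h\}$ is $O(h^2)$ and vanishes after dividing by $h$. But that repair is just the paper's observation in disguise; stopping at $T_1$ states it most cleanly and turns the whole infinitesimal step into algebra.
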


\paragraph*{Remark} It is possible that $\zeta^*$ is not attainable in the equation above. In this case there is not attainable optimum policy, but we can find controls with expected profit within any degree of accuracy from the value function $V$.

To obtain a full characterization of the value function $V$ (and the optimal policy), we show now that $V$ is the only solution of \eqref{qvi}.
\begin{theorem}
Let $g$ be a non-negative bounded function on $[0,O]$ that solves \eqref{qvi} such that $g(0)=0$. Then $g=V$.
\end{theorem}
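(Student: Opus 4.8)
The plan is to establish the two inequalities $g \ge V$ and $g \le V$ separately; together with the definition \eqref{ObjFunc} of $V$ as a supremum this is a verification argument in the spirit of the proof of Theorem \ref{propqvi}. Throughout I will exploit that, because $g$ solves \eqref{qvi}, the two inequalities $\delta g(r) - \mathcal{A}g(r) - G(r) \ge 0$ and $g(r) \ge \mathcal{M}g(r)$ hold at every $r \in [0,O]$, with equality in the first whenever $g(r) > \mathcal{M}g(r)$ and (trivially) equality in the second whenever $g(r) = \mathcal{M}g(r)$.

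First I would prove $g \ge V$ by showing $g(r) \ge J(r,\nu)$ for an arbitrary admissible $\nu = \{(\tau_i,\zeta_i)\} \in \mathcal{I}$ and then taking the supremum. Writing $\tau_0 = 0$ and applying Dynkin's Formula \eqref{Dynk} to $g$ on each interval $[\tau_i \wedge \tau^{\nu}, \tau_{i+1} \wedge \tau^{\nu})$, on which $R^{\nu}$ evolves as the uncontrolled process, the bound $\mathcal{A}g - \delta g \le -G$ turns each increment into $\mathbb{E}[e^{-\delta \tau_{i+1}} g(R^{\nu}_{\tau_{i+1}-}) - e^{-\delta \tau_i} g(R^{\nu}_{\tau_i})] \le -\mathbb{E}[\int_{\tau_i}^{\tau_{i+1}} e^{-\delta s} G(R^{\nu}_s)\,ds]$, all intersected with $\{s < \tau^{\nu}\}$. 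At each intervention the inequality $g \ge \mathcal{M}g$ together with \eqref{interoper} gives $g(R^{\nu}_{\tau_{i+1}-}) \ge g(R^{\nu}_{\tau_{i+1}-} + \zeta_{i+1}) - C(R^{\nu}_{\tau_{i+1}-},\zeta_{i+1}) = g(R^{\nu}_{\tau_{i+1}}) - C(R^{\nu}_{\tau_{i+1}-},\zeta_{i+1})$. Summing these two families of estimates telescopes, and the boundedness of $g$ forces the remainder $\mathbb{E}[e^{-\delta \tau_n} g(R^{\nu}_{\tau_n}) I_{\{\tau_n < \tau^{\nu}\}}] \to 0$, since $e^{-\delta \tau_n}\to 0$ whenever there are infinitely many interventions, while the uncontrolled tail is killed by the discount factor on $\{\tau^{\nu}=\infty\}$. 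This yields $g(r) \ge J(r,\nu)$, hence $g \ge V$.

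For the reverse inequality $g \le V$ I would run the candidate policy $\nu^{\ast}$ dictated by $g$: decompose $[0,O]$ into $A_g = \{g = \mathcal{M}g\}$ and $B_g = \{g > \mathcal{M}g\}$, never intervene while $R^{\nu} \in B_g$, and intervene as soon as a shock drives the state into $A_g$, choosing an impulse $\zeta$ that attains (or, by the Remark following Theorem \ref{propqvi}, attains up to $\varepsilon$) the supremum in $\mathcal{M}g$. Since the process is piecewise constant and jumps only at the Poisson epochs, the no-intervention stretches coincide with inter-shock waiting times, so on $B_g$ the relation $\delta g - \mathcal{A}g = G$ holds and the Dynkin estimates above become equalities; at each intervention the choice of $\zeta$ makes $g(R^{\nu}_{\tau_i-}) = g(R^{\nu}_{\tau_i}) - C(R^{\nu}_{\tau_i-},\zeta_i)$ up to the accumulated $\varepsilon$. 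Retracing the telescoping sum with equalities gives $g(r) \le J(r,\nu^{\ast}) + \varepsilon \le V(r) + \varepsilon$, and letting $\varepsilon \downarrow 0$ yields $g \le V$. Combining the two bounds gives $g = V$.

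The main obstacle is the admissibility of $\nu^{\ast}$ in the second step: I must check that $\nu^{\ast} \in \mathcal{I}$, i.e. that its intervention times are genuine $\mathcal{F}_t$-stopping times and, crucially, that they do not accumulate. This reduces to verifying that after each intervention the state lands back in the no-intervention region $B_g$, so that a further shock is needed before the next impulse; here the hypothesis $C > 0$ is essential, since a strictly positive intervention cost rules out splitting one impulse into two and thus prevents instantaneous chattering. Once no accumulation is secured, the Poisson structure guarantees finitely many interventions on every finite horizon, the remainder terms vanish as above, and the $\varepsilon$-optimal selection of $\zeta$ cleanly absorbs the possible non-attainability of $\zeta^{\ast}$ flagged in the Remark.
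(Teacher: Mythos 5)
Your proposal is the same verification argument as the paper's: the upper bound $g\geq V$ via Dynkin's Formula plus the two inequalities encoded in \eqref{qvi}, applied to an arbitrary $\nu\in\mathcal{I}$; and the lower bound by running the $g$-induced policy with $\varepsilon$-optimal impulses (the paper splits the error as $\epsilon/2^i$ over successive interventions, which is exactly your ``accumulated $\varepsilon$''). The only structural difference is that you telescope interval-by-interval with a remainder term at $\tau_n$, while the paper applies Dynkin's Formula once on $[0,t\wedge\tau^{\nu}]$ (with the jump sum included) and then lets $t\rightarrow\infty$.

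That organizational difference is where your one genuine gap sits. Your claim that the remainder $\mathbb{E}_r[e^{-\delta\tau_n}g(R^{\nu}_{\tau_n})I_{\{\tau_n<\tau^{\nu}\}}]\rightarrow0$ because ``$e^{-\delta\tau_n}\rightarrow0$ whenever there are infinitely many interventions'' is false for admissible policies whose intervention times accumulate at a finite time: the definition of an impulse control only requires $\tau_i\leq\tau_{i+1}$, so $\tau_n\rightarrow\infty$ is not guaranteed, and your non-chattering argument via $C>0$ is made only for the candidate policy $\nu^{\ast}$, not for the arbitrary $\nu$ of the first direction. Two repairs are available: (a) the paper's route --- truncate at a deterministic $t$, so the terminal term is $\mathbb{E}_r[e^{-\delta(t\wedge\tau^{\nu})}g(R^{\nu}_{t\wedge\tau^{\nu}})]\geq0$ and can simply be dropped because $g\geq0$, after which bounded convergence in $t$ gives $J(r,\nu)\leq g(r)$; or (b) observe that $C$ increasing in both variables with $C>0$ yields $C(r,\zeta)\geq C(0,0)>0$, so any $\nu$ with accumulating interventions has $J(r,\nu)=-\infty$ and the inequality is vacuous for it. A second, related omission: you never invoke the hypothesis $g(0)=0$, yet it is needed precisely in the lower-bound direction, where on $\{\tau^{\nu^{\ast}}<\infty\}$ the terminal term converges to $\mathbb{E}_r[e^{-\delta\tau^{\nu^{\ast}}}g(R^{\nu^{\ast}}_{\tau^{\nu^{\ast}}})]$ and must equal zero --- not merely be nonnegative --- for $g(r)\leq J(r,\nu^{\ast})+\varepsilon$ to follow; this uses $g(0)=0$ together with the convention, implicit in the truncated integral defining $\mathcal{A}$ in \eqref{infinoper}, that $g$ is extended by zero below the failure threshold. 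With these two repairs your argument coincides with the paper's proof.
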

\begin{proof}
Let $\nu=\{(\tau_i,\zeta_i)\}_{i\in\mathbb{N}}\in\mathcal{I}$ and initial reliability level $r\in[0,O]$. Using Dynkin's Formula, for $t\geq0$ we have
\begin{align*}
\mathbb{E}_r[&e^{-\delta(t\wedge\tau^{\nu})}g(R^{\nu}_{t\wedge\tau^{\nu}})]\\
=&g(r)+\mathbb{E}_r\left[\int_0^{t\wedge\tau^{\nu}}e^{-\delta s}\left(-\delta g(R^{\nu}_s)+\mathcal{A}g(R^{\nu}_s)\right)ds\right]\\
&+\mathbb{E}_r\left[\sum\limits_{\tau_i<t\wedge\tau^{\nu}}e^{-\delta\tau_i}(g(R^{\nu}_{\tau_i-}+\zeta_i)-g(R^{\nu}_{\tau_i-}))\right].
\end{align*}
Since $g$ solves \eqref{qvi}, then 
\begin{align*}
\mathbb{E}_r[e^{-\delta(t\wedge\tau^{\nu})}g(R^{\nu}_{t\wedge\tau^{\nu}}&)]\\
\leq&g(r)-\mathbb{E}_r\left[\int_0^{t\wedge\tau^{\nu}}e^{-\delta s}G(R^{\nu}_s)ds\right]\\
&+\mathbb{E}_r\left[\sum\limits_{\tau_i<t\wedge\tau^{\nu}}e^{-\delta\tau_i}C(R^{\nu}_{\tau_i-},\zeta_i)\right].
\end{align*}
Letting $t\rightarrow\infty$, bounded convergence we get $J(r,\nu)\leq g(r)$
and taking $\sup$ over $\mathcal{I}$ we obtain 
$$V(r)\leq g(r).$$ 
To prove the reverse inequality, let $\epsilon>0$ and define the following admissible impulse control $\nu^g$: 
$$\tau_i^g=\inf\{t\geq\tau_{i-1}^g:g(R_t^{\nu^g})=\mathcal{M}g(R_t^{\nu^g})\}$$
and $\zeta_i^g$ such that 
$$g(R^{\nu^g}_{\tau_i^g-})\leq g(R^{\nu^g}_{\tau_i^g-}+\zeta_i^g)-C(R^{\nu^g}_{\tau_i^g-},\zeta_i^g)+\frac{\epsilon}{2^i}.$$
Note that $\delta g(R^{\nu^g}_s)-\mathcal{A}g(R^{\nu^g}_s)=G(R^{\nu^g}_s)$ for $s$ between interventions. Therefore, from the previous calculations we obtain
\begin{align*}
\mathbb{E}_r[e^{-\delta(t\wedge\tau^{\nu^g})}&g(R^{\nu^g}_{t\wedge\tau^{\nu^g}})]\\
\geq&g(r)-\mathbb{E}_r\left[\int_0^{t\wedge\tau^{\nu^g}}e^{-\delta s}G(R^{\nu^g}_s)ds\right]\\
&+\mathbb{E}_r\left[\sum\limits_{\tau^g_i<t\wedge\tau^{\nu^g}}e^{-\delta\tau^g_i}C(R^{\nu^g}_{\tau^g_i-},\zeta^g_i)\right]-\epsilon.
\end{align*}
Letting $t\rightarrow\infty$ again, we get $g(r)\leq J(\nu^g,r)+\epsilon\leq V(r)+\epsilon$. Since $\epsilon$ is arbitrary we get the reverse inequality
$$g(r)\leq V(r).$$
\end{proof}

\section{Numerical solution}
\label{Numsol}

To obtain the optimal policy is necessary to find the value function $V$ by solving equation \eqref{qvi}. Once we have $V$ we can compute the intervention and no intervention regions. Thus, for $r\in B$, we have that 

$$\delta V(r)-\mathcal{A}V(r)-G(r)=0.$$

Using the definition of the infinitesimal operator $\mathcal{A}$ (equation \eqref{infinoper}) with $f=V$ and solving the above equation for $V(r)$ it is obtained that 

\begin{equation}
V(r)=\frac{1}{\lambda+\delta}\left\{G(r)+\lambda \int_0^rV(r-y)dF(y) \right\}.
\end{equation}

On the other hand, for the region where interventions are required; i.e., $r\in A$, we have that

$$V(r)-\mathcal{M}V(r)=0.$$

Then, using the definition of the intervention operator (equation \eqref{interoper}), $V(r)$ satisfies:

\begin{equation}
V(r)=\sup_{0\leq\zeta\leq O-r}V(r+\zeta)-C(r,\zeta).
\end{equation}

To approximate $V$ we follow the Jacobi iteration method described in \cite{Kushner}. First, we discretize the interval $[0,O]$ in $N=\frac{1}{h}$ intervals and initizialize the vector $V^h_0\equiv0\in\mathbb{R}^{N+1}$. Now, given $V^h_n$ we compute
\begin{align*}
V^h&_{n+1}[j]=\\
&\max\left\{\frac{1}{\lambda+\delta}\left\{G(jh)+\lambda\sum\limits_{i=0}^jV^h_n[j-i]p_i \right\},\mathcal{M}V^h_n[j]\right\},
\end{align*}
where $p_i$ is the discretized density of $F$ for $ih$ and $$\mathcal{M}V^h_n[j]=\max\limits_{0\leq i\leq N+1-j}V^h_n[j+i]-C(jh,ih).$$ Note that $V_n^h[0]=0$ for all $n$ and all $h$ since $V(0)=0$.

We continue iterating over $n$ until 

$$\max\limits_j\left|V^h_n[j]-V^h_{n+1}[j]\right|<\epsilon,$$
for a given error tolerance $\epsilon$.

\section{Illustrative example}

Consider an infrastructure system subject to shocks (e.g., earthquakes) whose occurrence times follow an exponential distribution with rate $\lambda = 0.5$. The sizes of the shocks are log-normally distributed with $\mu=0.3$ and $\sigma=1$; and, consequently, distribution parameters $\mu_{log}=2.23$ and $\sigma_{log}=2.9$. Furthermore, assume that the performance (state) of the system is permanently monitored. This means that it is possible to know the state of the system when required (e.g., at periodic inspection times). The system state (which should be in practice measured in physical units) is normalized and evaluated within the interval $[0,1]$; where 1 means that the system is in ``as good as new'' condition, and $k^*=0$ indicates that it is not in operating condition. The objective of the study is to define an optimal maintenance policy.

For the purpose of this example, the following assumptions are made. The function $G(r)$ (equation \eqref{CBeq}), which is the utility function is given by:

\begin{equation}
G(r)=C\frac{1}{\alpha}(1-e^{-\alpha r}),
\end{equation} 

\noindent where $C=5$ and $\alpha = 2$. Note that this curve has the form of an exponential risk aversion utility function. On the other hand, it is assumed that the costs associated to an intervention are given by the following function (equation \eqref{CBeq}),

\begin{equation}
C(r,\zeta)=r+\zeta^2+K
\end{equation}

\noindent where the constant $K=0.1$ reflects the fixed costs of any intervention. Note that the intervention costs are proportional to the current state of the system and grow with the square of the size of the intervention. For both utility and cost, these values are discounted to the time of the decision by using a discount factor $\delta = 0.2$.

The outcome of the approach consists of two parts. First, it is necessary to define, for every system state $r$, the intervention intensity $\zeta$ that maximizes the expected profit (equation \ref{CBeq}). This requires dividing the system in two states: a region (state space values) where no intervention is required and a set of values for which it is necessary. Thus, for a given system state obtained (measured) at the time of inspection, the intervention level that maximizes the expected profit at that particular time (i.e., discounted) can be obtained from this result. The second result, is the value function $V$ that provides the optimum expected profit value obtained if the required actions (defined previously) are conducted.

\begin{figure}[htbp] 
\begin{center}
\includegraphics[scale=0.6]{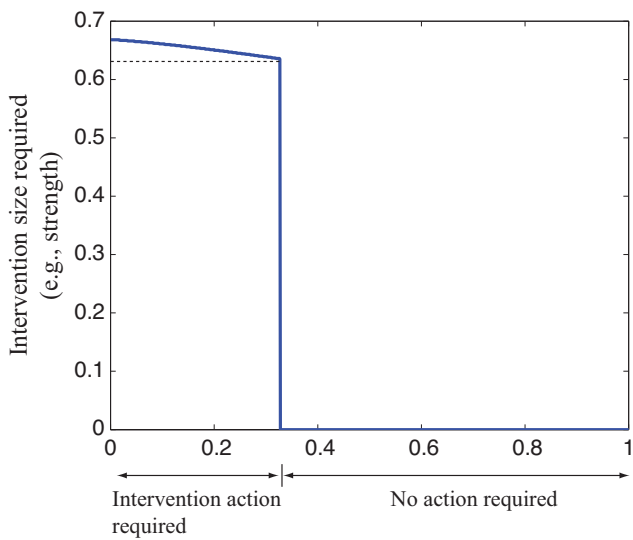}
\caption{Description of the intervention requirements}
\label{Fig3}
\end{center}
\end{figure}

Following the numerical approach presented in section \ref{Numsol} we obtain the results after 8 iterations at a minimum computational time. The division between the intervention and not intervention states (i.e. regions $A$ and $B$) can be observed in Fig. \ref{Fig3}. Clearly, if the system is operating at a level $r>0.328$ there is no need for an intervention. However, if an inspection indicates that its state is $r\leq 0.328$ and intervention is required and the size of the intervention is shown in the figure. For instance, if after an inspection at time $t$ the system state is $R_t=0.6$ no intervention is required, but if $R_t=0.3$ and intervention of magnitude $\zeta=0.64$ will be necessary to maximize the profit. This means that immediately after the intervention the system will be at state 0.94 (i.e., 0.3+0.64=0.94). Notice that Fig. \ref{Fig3} does not change with time as mentioned earlier. If in the next inspection the observed state of the system is, say 0.6, no intervention will be necessary

Furthermore, if this maintenance policy is carried out, the maximum expected profit can be observed in Fig. \ref{Fig4}. Therefore, if after an inspection at time $t$ the system state is $R_t=0.3$, the maximum expected profit that we can obtain, by following the above policy, is $V(0.3)=3.77$. Otherwise, if the inspection yields a state $R_t=0.6$, we can obtain a maximum of $V(0.6)=4.13$.

\begin{figure}[htbp] 
\begin{center}
\includegraphics[scale=0.6]{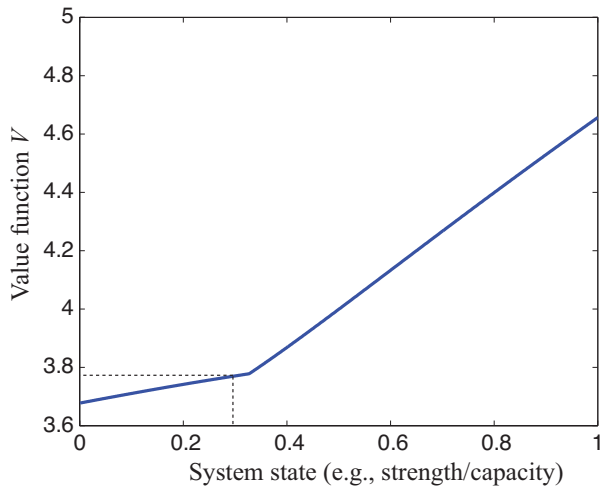}
\caption{Maximum expected profit.}
\label{Fig4}
\end{center}
\end{figure}

\section{Summary and conclusions}

The paper presents an approach to define the optimum maintenance policy of a system that deteriorates with time as a result of shocks. The deterioration process is modeled as a compound Poisson process. The proposed maintenance strategy follows an impulse control model that requires the permanent (or at least frequent) monitoring of the system state. Then, at every inspection time the model can be used to make a decision as to whether the system should be intervened or not. In case of requiring an intervention, the extent of the optimum repair can be obtained from the model. The decisions based on the model guarantee that the net present value of the utility at the time of the intervention is maximum. It is suggested in the paper that this maintenance approach is of particular importance for engineering systems that operate under adverse environments for long time periods (e.g., $>25$ years); for instance, physical infrastructure (e.g., bridges, highways). Traditional maintenance strategies define long term inspection and maintenance plans at a given point in time (usually $t=0$) without considering possible variations in the system's use or condition, and the technology available at the time of the decisions. Thus, it is argued that the best maintenance policy in these cases can be obtained by combining both permanent monitoring and optimum interventions. 

\appendix
\setcounter{equation}{6}
\begin{lema1}
Let $T$ be a stopping time with respect to the filtration $\mathcal{F}_t$.  Then for all $r\in[0,O]$

\begin{equation}
V(r)\geq\mathbb{E}_r\left[\int_0^{T\wedge\tau}e^{-\delta s}G(R_s)ds+e^{-\delta T}V(R_T)I_{\{T<\tau\}}\right].
\end{equation}

Furthermore, we have equality in \eqref{dynamic} if it is not optimal to intervene the system before $T$.
\end{lema1}

\begin{proof}
Let $r\in[0,O]$ and $\nu=\{(\tau_i,\zeta_i)\}_{i\in\mathbb{N}}\in\mathcal{I}$, then $J(r,\nu)=\mathbb{E}_r\left[\eta\right]$ where
$$\eta=\int_0^{\tau^{\nu}}e^{-\delta s}G(R_s^{\nu})ds-\sum\limits_{\tau_i<\tau^{\nu}}e^{-\delta\tau_i}C(R^{\nu}_{\tau_i-},\zeta_i).$$
Given $T$ a stopping time with respect to the filtration $\mathcal{F}_t$, the control $\nu+T=\{(\tau_i+T,\zeta_i)\}_{i\in\mathbb{N}}$ is also an admissible control. If we call $\Theta_t$ the usual shift operator (see \cite{rogerswilliams}), from the strong Markov property of the process $R$ we have
\begin{align*}
J(r,\nu&+T)\\
&=\mathbb{E}_r\left[\int_0^{T\wedge\tau}e^{-\delta s}G(R_s)ds+e^{-\delta T}\Theta_T\eta I_{\{T<\tau\}}\right]\\
&=\mathbb{E}_r\left[\int_0^{T\wedge\tau}e^{-\delta s}G(R_s)ds+e^{-\delta T}\mathbb{E}_{R_T}[\eta]I_{\{T<\tau\}}\right]\\
&=\mathbb{E}_r\left[\int_0^{T\wedge\tau}e^{-\delta s}G(R_s)ds+e^{-\delta T}J(R_T,\nu)I_{\{T<\tau\}}\right]
\end{align*}
Taking $\sup$ over $\nu\in\mathcal{I}$ we obtain \eqref{dynamic}. If it is optimal not to intervene before $T$, then $V(r)=\sup_{\nu\in\mathcal{I}}J(r,\nu+T)$.
\end{proof}

\setcounter{equation}{12}
\begin{teorema1}
The value function $V$ solves the equation
\begin{equation}\label{qvi}
\min\{\delta V(r)-\mathcal{A}V(r)-G(r),V(r)-\mathcal{M}V(r)\}=0,
\end{equation}
for all $r\in[0,O]$.
\end{teorema1}

\begin{proof}
Let $r\in[0,O]$. From equation \eqref{interV} $V\geq\mathcal{M}V$. On the other hand, for any stopping time $T\leq\tau$ a.s., by Dynkin's Formula and boundedness of $V$, we have
\begin{align*}
V(r)-\mathbb{E}_r[e^{-\delta T}&V(R_T)]\\
&=\mathbb{E}_r\left[\int_0^Te^{-\delta s}\left(\delta V(R_s)-\mathcal{A}V(R_s)\right)ds\right].
\end{align*}
Using \eqref{dynamic} we get that
$$\mathbb{E}_r\left[\int_0^Te^{-\delta s}\left(\delta V(R_s)-\mathcal{A}V(R_s)-G(R_s)\right)ds\right]\geq0.$$
If we choose $T=T_1$, the time of the first shock, then $R_s=r$ for $0\leq s< T_1$ and therefore $\delta V(r)-\mathcal{A}V(r)-G(r)\geq0$. Now, suppose that $V(r)>\mathcal{M}V(r)$, hence it is not optimal to intervene at time 0, so it is not optimal to intervene before $T_1$. Then, by lemma \ref{lema} we get $\delta V(r)-\mathcal{A}V(r)-G(r)=0$.
\end{proof}

\nocite{*}
\bibliography{referencias}

\begin{thebibliography}{10}

\bibitem{AvenJan99}
T.~J. Aven and U.~Jansen.
\newblock {\em Stochastic Models in Reliability}.
\newblock Applications of Mathematics: stochastic modeling and applied
  probability series 41. Springer, New York, 1999.

\bibitem{BarlowPosh65}
R.~E. Barlow and F.~Poschan.
\newblock {\em Mathematical theory of Reliability}.
\newblock Wiley, New York, 1965.

\bibitem{Bastidas09}
E.~Bastidas, P.~Bressolette, A.~Chateneuf, and M.~Sanchez-Silva.
\newblock Probabilistic lifetime assessment of rc structures subject to
  corrosion-fatigue deterioration.
\newblock {\em Structural Safety}, 31:84--96, 2009.

\bibitem{Dekker96}
R.~Dekker.
\newblock Applications of maintenance optimization models: a review and
  analysis.
\newblock {\em Reliability engineering and systems safety}, 51:229�240, 1996.

\bibitem{Feldman77a}
R.~M. Feldman.
\newblock Optimal replacement for systems governed by markov additive shock
  processes.
\newblock {\em Annals of Probability}, 5:413--429, 1977.

\bibitem{Frang04}
D.~M. Frangopol, M.-J. Kallen, and M.~Noortwijk.
\newblock Probabilistic models for life-cycle performance of deteriorating
  structures: review and future directions.
\newblock {\em Steel construction Prog. Struct. Engng Materials}, 6:197--212,
  2004.

\bibitem{Klutke02}
G.-A. Klutke and Y.~Yang.
\newblock The availability of inspected systems subject to shocks and graceful
  deterioration.
\newblock {\em IEEE Transactions on Reliability}, 51(3):371--374, 2002.

\bibitem{Kushner}
H.~Kushner and P.~Dupuis.
\newblock {\em Numerical methods for stochastic control problems in continuous
  time}.
\newblock Springer-Verlag, New York, 1992.

\bibitem{Misra92}
K.~B. Misra.
\newblock {\em Reliability analysis and prediction: a methodology oriented
  treatment}.
\newblock Elsevier, Amsterdam, 1992.

\bibitem{Naka76}
T.~Nakagawa.
\newblock On a replacement problem of cumulative damage model: Part 1.
\newblock {\em Operational Research}, 27(4):895--900, 1976.

\bibitem{Naka85}
T.~Nakagawa.
\newblock Continuous and discrete age replacement policies.
\newblock {\em Operational Research}, 36(2):147--154, 1985.

\bibitem{Naka05}
T.~Nakagawa.
\newblock {\em Maintenance Theory of Reliability}.
\newblock Springer Series on Reliability Engineering, London, 2005.

\bibitem{rogerswilliams}
L.~C.~G. Rogers and D.~Williams.
\newblock {\em Diffusions, {M}arkov processes, and martingales. {V}ol. 1}.
\newblock Cambridge Mathematical Library. Cambridge University Press, 2000.

\bibitem{San11}
M.~Sanchez-Silva, G.-A. Klutke, and D.~Rosowsky.
\newblock Life-cycle performance of structures subject to multiple
  deterioration mechanisms.
\newblock {\em Structural Safety}, 33(3):206�217, 2011.

\bibitem{TAlb}
S.~Thonhauser and H.~Albrecher.
\newblock Opimal dividend strategies for a compound poisson process under
  transaction costs and power utility.
\newblock {\em Stochastic Models}, 27:120--140, 2011.

\bibitem{AValdez89}
C.~Valdez-Flores and R.~M. Feldman.
\newblock A survey of preventive maintenance models for stochastically
  deteriorating single unit systems.
\newblock {\em Naval Research Logistics Quarterly}, 36:419--446, 1989.

\bibitem{Wang06}
H.~Wang and H.~Pham.
\newblock {\em Reliability and optimal maintenance}.
\newblock Springer, London, 2006.

\bibitem{WangPham}
Y.~Wang and H.~Pham.
\newblock Modeling the dependent competing risks with multiple degradation
  processes and random shock using time-varying copulas.
\newblock {\em IEEE Transactions on Reliability}, 61(1):13 --22, 2012.

\bibitem{Wortman94}
M.~A. Wortman, G.-A. Klutke, and H.~Ayhan.
\newblock A maintenance strategy for systems subjected to deterioration
  governed by random shocks.
\newblock {\em IEEE Transactions on Reliability}, 43(3):439--445, 1994.

\bibitem{YeCX}
Z.~S. Ye, L.~C. Tang, and H.~Y. Xu.
\newblock A distribution-based systems reliability model under extreme shocks
  and natural degradation.
\newblock {\em IEEE Transactions on Reliability}, 60(1):246 --256, 2011.

\end{thebibliography}
\bibliographystyle{abbrv}

\end{document}